\newtheorem{thm}{Theorem}[section]
\newtheorem{lemma}[thm]{Lemma}
\theoremstyle{remark}
\newcommand{\Ad}{{\rm{Ad}}}
\newcommand{\BN}{\mathbf N}
\newcommand{\BC}{\mathbf C}
\newcommand{\BK}{\mathbf K}
\newcommand{\la}{\langle}
\newcommand{\ra}{\rangle}
\newcommand{\Aut}{{\rm{Aut}}}
\newtheorem{Def}{Definition}[section]
\title{Fell bundles over a countable discrete group and strong Morita equivalence for
inclusions of $C^*$-algebras}
\author{Kazunori Kodaka}
\address{Department of Mathematical Sciences, Faculty of Science, Ryukyu
\endgraf
University, Nishihara-cho, Okinawa, 903-0213, Japan}
\address{\sl{E-mail address}: \rm{kodaka@math.u-ryukyu.ac.jp}}
\keywords{cross-sectional $C^*$-algebras, equivalence bundles, Fell bundles, inclusions of $C^*$-algebras,
strong Morita equivalence}
\subjclass[2010]{Primary 46L05}
\begin{document}
\begin{abstract}
We consider two saturated Fell bundles over a countable discrete group, whose unit fibers are $\sigma$-unital
$C^*$-algebras. Then by taking the reduced cross-sectional $C^*$-algebras, we get two inclusions of $C^*$-algebras.
We suppose that they are strongly Morita equivalent as inclusions of $C^*$-algebras.
Also, we suppose that one of the inclusions of $C^*$-algebras is irreducible, that is, the relative commutant
of one of the unit fiber algebras, which is a $\sigma$-unital $C^*$-algebra, in the multiplier $C^*$-algebra of the reduced
cross-sectional $C^*$-algebra is trivial. We show that the two saturated Fell bundles are then equivalent up to some
automorphism of the group.
\end{abstract}
 
 \maketitle
 
\section{Introduction}\label{sec:intro} Let $G$ be a countable discrete group and let $\mathcal{A}=\{A_t\}_{t\in G}$
be a Fell bundle over $G$. Let $C_r^* (\mathcal{A})$ be the reduced cross-sectional $C^*$-algebra of $\mathcal{A}$
and $A_e =A$, a $C^*$-algebra, where $e$ is the unit element in $G$. Then we obtain an inclusion of $C^*$-algebras
$A\subset C_r^* (\mathcal{A})$ and we call it the inclusion of $C^*$-algebras
\sl
induced
\rm
by $\mathcal{A}$.
By Abadie and Ferraro \cite [Sections 3, 4]{AF:Fell}, it is easy to show that if
Fell bundles $\mathcal{A}=\{A_t \}_{t\in G}$ and $\mathcal{B}=\{B_t \}_{t\in G}$ over
$G$ are equivalent with respect to an equivalence bundle $\mathcal{X}=\{X_t \}_{t\in G}$ over $G$
such that
$$
\overline{{}_{\mathcal{A}} \la X_t \, , \, X_s \ra}=A_{ts^{-1}} \quad , \quad
\overline{\la X_t \, , \, X_s \ra_{\mathcal{B}}}=B_{t^{-1}s}
$$
for any $t, s\in G$, then the inclusions of $C^*$-algebras induced by $\mathcal{A}$ and $\mathcal{B}$
are strongly Morita equivalent, where $\overline{{}_{\mathcal{A}} \la X_t \, , \, X_s \ra}$ means
the closure of linear span of the set
$$
\{{}_{\mathcal{A}} \la x, y \ra \, | \, x\in X_t \, , \, y\in X_s \}
$$
and $\overline{\la X_t \, , \, X_s \ra_{\mathcal{B}}}$ means the closure of the same set as above.
\par
In this paper, we shall show the inverse direction as follows: Let $\mathcal{A}=\{A_t\}_{t\in G}$
and $\mathcal{B}=\{B_t\}_{t\in G}$ be saturated Fell bundls over $G$. we suppose that $A_e =A$ and $B_e =B$
are $\sigma$-unital $C^*$-algebras and that $A' \cap C_r^* (\mathcal{A})=\BC1$.
If the inclusions of $C^*$-algebras induced
by $\mathcal{A}$ and $\mathcal{B}$ are strongly Morita equivalent, then there is an automorphism $f$
of $G$ such that $\mathcal{A}$ and $\mathcal{B}^f$ are equivalent with respect to an equivalence bundle
$\mathcal{X}=\{X_t\}_{t\in G}$ such that
$$
\overline{{}_{\mathcal{A}} \la X_t \, , \, X_s \ra}=A_{ts^{-1}} \quad , \quad
\overline{\la X_t \, , \, X_s \ra_{\mathcal{B}}}=B_{t^{-1}s}
$$
for any $t, s\in G$, where $\mathcal{B}^f$ is a Fell bundle over $G$ induced by $\mathcal{B}=\{B_t\}_{t\in G}$
and $f$, that is, $\mathcal{B}^f =\{B_{f(t)}\}_{t\in G}$.
\par
We prove this result in the following way: Let $\BK$ be the $C^*$-algebra of all compact operators on a countably
infinite dimensional Hilbert space. Let $\mathcal{A}^S$ and $\mathcal{B}^S$ be the Fell bundles over $G$
induced by $\mathcal{A}$, $\BK$ and $\mathcal{B}$, $\BK$, respectively. Then since $A$ and $B$ are
$\sigma$-unital, $\mathcal{A}^S$ and $\mathcal{B}^S$ satisfy the assumptions of Exel
\cite [Theorem 7.3]{Exel:regular}. Since $\mathcal{A}^S$ and $\mathcal{B}^S$ are saturated,
by \cite [Theorem 7.3]{Exel:regular} there are twisted actions $(\alpha, w_{\alpha})$ and $(\beta, w_{\beta})$
of $G$ on the $C^*$-algebras $A\otimes \BK$ and $B\otimes\BK$, which are the unit fibre algebras of
$\mathcal{A}^S$ and $\mathcal{B}^S$, such that $\mathcal{A}^S$ and $\mathcal{B}^S$ are isomorphic to the semidirect
product bundles of $A\otimes\BK$, $G$ and $B\otimes\BK$, $G$ constructed by $(\alpha, w_{\alpha})$
and $(\beta, w_{\beta})$ as Fell bundles over $G$, respectively.
Since the inclusions of $C^*$-algebras induced by $\mathcal{A}$ and $\mathcal{B}$ are strongly Morita equivalent,
so are the inclusions of $C^*$-algebras induced by $\mathcal{A}^S$ and $\mathcal{B}^S$.
Hence the inclusions of $C^*$-algebras induced by
$(\alpha, w_{\alpha})$ and $(\beta, w_{\beta})$ are strongly Morita equivalent. Then since the inclusions are
irreducible, by \cite [Theorem 5.5] {Kodaka:discrete} there is an automorphism $f$ of $G$ such that
$(\alpha, w_{\alpha})$ and $(\beta^f , w_{\beta}^f )$ are strongly Morita equivalent, where $(\beta^f , w_{\beta}^f )$ is
the twisted action of $G$ on $B\otimes\BK$ induced by $(\beta, w_{\beta})$ and $f$, that is,
$$
\beta_t^f =\beta_{f(t)} \quad , \quad w_{\beta}^f (t, s)=w_{\beta}(f(t)\, , \, f(s))
$$
for any $t, s\in G$. Using this, we can prove the result.

\section{Preliminaries}\label{sec:pre} Let $A$ be a $C^*$-algebra and we denote
by $M(A)$ the multiplier $C^*$-algebra of $A$. Let $\alpha$ be an automorphism of $A$.
Then there is a unique strictly continuous automorphism of $M(A)$ extending $\alpha$ by Jensen and
Thomsen \cite [Corollary 1.1.15]{JT:KK}. We denote it by $\underline{\alpha}$.
\par
Let $G$ be a countable discrete group and $e$ the unit element
in $G$. Let $\mathcal{A}=\{A_t\}_{t\in G}$ be a Fell bundle over $G$ and let $A_e =A$, a $C^*$-algebra.
Also, let $\mathcal{B}=\{B_t\}_{t\in G}$ be a Fell bundle over $G$ and let $B_e =B$, a $C^*$-algebra.
Following Abadie and Ferraro \cite [Definitions 2.1 and 2.2]{AF:Fell}, we give the definition of an equivalence bundle:

\begin{Def}\label{def:pre0} (1) A
\sl
right Hilbert $\mathcal{B}$-bundle
\rm
is a complex Banach bundle over $G$, $\mathcal{X}=\{X_t \}_{t\in G}$ with continuous maps
$$
\mathcal{X}\times\mathcal{B}\to\mathcal{X}, \, (x, b)\mapsto xb \quad \text{and} \quad \la - , - \ra_{\mathcal{B}}:
\mathcal{X}\times\mathcal{X}\to\mathcal{B}, \, (x, y)\mapsto \la x, y \ra_{\mathcal{B}}
$$
such that:
\newline
(1R) $X_r B_s \subset X_{rs}$ and $\la X_r \, , \, X_s \ra_{\mathcal{B}}\subset B_{r^{-1}s}$ for all $r, s\in G$.
\newline
(2R) $X_r \times B_s \to X_{rs}$, $(x, b)\mapsto xb$ is bilinear for all $r, s\in G$.
\newline
(3R) $X_s \to B_{r^{-1}s}$, $y\mapsto \la x, y \ra_{\mathcal{B}}$ is linear for all $x\in X_r$ and $s\in G$.
\newline
(4R) $\la x, yb \ra_{\mathcal{B}}=\la x, y \ra_{\mathcal{B}}b$ and $\la x, y \ra_{\mathcal{B}}^*= \la y, x \ra_{\mathcal{B}}$
for all $x, y\in \mathcal{X}$ and $b\in \mathcal{B}$.
\newline
(5R) $\la x, x \ra_{\mathcal{B}}\geq 0$ for all $x\in \mathcal{X}$ and $\la x , x \ra_{\mathcal{B}}=0$ implies $x=0$.
Beside, each fiber $X_t$ is complete with respect to the norm $x\mapsto ||\la x, x \ra_{\mathcal{B}}||^{1/2}$.
\newline
(6R) For all $x\in \mathcal{X}$, $||x||^2 =||\la x, x \ra_{\mathcal{B}}||$.
\newline
(7R) $\overline{\{\la X_s \, , \, X_s \ra_{\mathcal{B}} \, | \, s\in G \}}=B_e$.
\par
(2) A
\sl
left Hilbert $\mathcal{A}$-bundle
\rm
is a complex Banach bundle over $G$, $\mathcal{X}=\{X_t \}_{t\in G}$ with continuous maps
$$
\mathcal{A}\times\mathcal{X}\to\mathcal{X}, \, (a, x)\mapsto ax \quad \text{and} \quad {}_{\mathcal{A}}\la - , - \ra:
\mathcal{X}\times\mathcal{X}\to\mathcal{A}, \, (x, y)\mapsto {}_{\mathcal{A}}\la x, y \ra
$$
such that:
\newline
(1L) $A_r X_s \subset X_{rs}$ and ${}_{\mathcal{A}}\la X_r \, , \, X_s \ra\subset A_{rs^{-1}}$ for all $r, s\in G$.
\newline
(2L) $A_r \times X_s \to X_{rs}$, $(a, x)\mapsto ax$ is bilinear for all $r, s\in G$.
\newline
(3L) $X_s \to A_{sr^{-1}}$, $y\mapsto {}_{\mathcal{A}}\la y, x \ra$ is linear for all $x\in X_r$ and $s\in G$.
\newline
(4L) ${}_{\mathcal{A}}\la ax, y \ra=a{}_{\mathcal{A}}\la x, y \ra$ and ${}_{\mathcal{A}}\la x, y \ra^*=
{}_{\mathcal{A}}\la y, x \ra$ for all $x, y\in \mathcal{X}$ and $a\in \mathcal{A}$.
\newline
(5L) ${}_{\mathcal{A}}\la x, x \ra\geq 0$ for all $x\in \mathcal{X}$ and ${}_{\mathcal{A}}\la x , x \ra=0$ implies $x=0$.
Beside, each fiber $X_t$ is complete with respect to the norm $x\mapsto ||{}_{\mathcal{A}}\la x, x \ra||^{1/2}$.
\newline
(6L) For all $x\in \mathcal{X}$, $||x||^2 =||{}_{\mathcal{A}}\la x, x \ra||$.
\newline
(7L) $\overline{\{{}_{\mathcal{A}}\la X_s \, , \, X_s \ra \, | \, s\in G \}}=A_e$.
\par
(3) We say that $\mathcal{X}$ is an $\mathcal{A}-\mathcal{B}$-
\sl
equivalence bundle
\rm
if $\mathcal{X}$ is both a left Hilbert $\mathcal{A}$-bundle, a right Hilbert $\mathcal{B}$-bundle and
${}_{\mathcal{A}} \la x, y \ra z=x \la y, z \ra_{\mathcal{B}}$ for all $x, y, z\in \mathcal{X}$. Besides,
we say $\mathcal{A}$ is equivalent to $\mathcal{B}$ if there exists an $\mathcal{A}-\mathcal{B}$-equivalence bundle.
\end{Def}

\begin{Def}\label{def:pre1} Let $\mathcal{A}=\{A_t\}_{t\in G}$ be a Fell bundle over $G$. We say that
$\mathcal{A}=\{A_t\}_{t\in G}$ is
\sl
saturated
\rm
if $\overline{A_t A_t^*}=A_e$ for any $t\in G$.
\end{Def}

Let $\mathcal{A}=\{A_t \}_{t\in G}$ be a saturated Fell bundle over $G$ and
let $A_e =A$, a $C^*$-algebra. We suppose that
$A$ is a $\sigma$-unital $C^*$-algebra.
\par
Let $C_r^* (\mathcal{A})$ be the reduced cross-sectional $C^*$-algebra of $\mathcal{A}$.
Then for any $t\in G$, $A_t$ is regarded as a closed subspace of $C_r^* (\mathcal{A})$ since
$G$ is discrete.
\par
Let $\BK$ be the $C^*$-algebra of all compact operators on a countably infinite dimensional Hilbert
space. Let $\{e_{ij}\}_{i, j\in\BN}$ be a system of matrix units of $\BK$.
\par
Let $A_t \otimes\BK$ be the closure of linear span of the subset
$$
\{x\otimes k\in C_r^* (\mathcal{A})\otimes\BK \, | \, x\in A_t \, , \, k\in\BK \} .
$$
Let $\mathcal{A}^S=\{A_t \otimes\BK \}_{t\in G}$. Then $\mathcal{A}^S$ is a saturated Fell bundle over $G$
and $A\otimes\BK$ is its unit fibre algebra. Clearly $C_r^* (\mathcal{A}^S )=C_r^* (\mathcal{A})\otimes\BK$.
Since $\mathcal{A}$ is saturated, we can regard $A_t$ as an $A-A$-equivalence bimodule for any
$t\in G$ and by the definition of the product in $C_r^* (\mathcal{A}^S )$ we can regard $A_t \otimes\BK$ as
the tensor product of the $A-A$-equivalence bimodule $A_t$ and the trivial $\BK-\BK$-equivalence
bimodule $\BK$, which is $A\otimes\BK-A\otimes\BK$-equivalence bimodule for any $t\in G$.
Thus $\mathcal{A}^S$ is a saturated Fell bundle over $G$. Since $A\otimes\BK$ is $\sigma$-unital,
by \cite [Theorem 7.3]{Exel:regular}, there is a twisted action $(\alpha, w_{\alpha})$ of $G$ on
$A\otimes\BK$ such that $\mathcal{A}^S$ is isomorphic to the semidirect product bundle over $G$
induced by $(\alpha, w_{\alpha})$ as Fell bundles over $G$.

\begin{lemma}\label{lem:pre2} With the above notation, if $A' \cap M(C_r^* (\mathcal{A}))=\BC1$, then
$(A\otimes\BK)' \cap M(A\rtimes_{\alpha, w_{\alpha}, r} G)=\BC 1$.
\end{lemma}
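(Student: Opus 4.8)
The plan is to reduce the lemma to a fact about stabilizations. By the isomorphism of Fell bundles recalled just before the lemma, $C_r^*(\mathcal{A}^S)=C_r^*(\mathcal{A})\otimes\BK$ is $*$-isomorphic to $(A\otimes\BK)\rtimes_{\alpha,w_{\alpha},r}G$ by an isomorphism carrying the unit fibre $A\otimes\BK$ of $\mathcal{A}^S$ onto the canonical copy of $A\otimes\BK$ in the crossed product; since a $*$-isomorphism extends uniquely to the multiplier algebras, it suffices to prove that, writing $C=C_r^*(\mathcal{A})$, one has $(A\otimes\BK)'\cap M(C\otimes\BK)=\BC1$. I will use that $C$ is $\sigma$-unital: a countable approximate unit $(u_n)$ of $A=A_e$ satisfies $\overline{A_e A_t}=A_t$ for every $t$ because $\mathcal{A}$ is saturated, hence $u_n c\to c$ and $c u_n\to c$ for every $c\in C$.

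Fix a system of matrix units $\{e_{ij}\}$ of $\BK$ and let $m\in(A\otimes\BK)'\cap M(C\otimes\BK)$. First I would check that $m$ commutes with $1\otimes\BK$. Since $(u_n)$ is an approximate unit for $C$, the net $u_n\otimes e_{ij}$ converges to $1\otimes e_{ij}$ strictly in $M(C\otimes\BK)$; as $u_n\otimes e_{ij}\in A\otimes\BK$ commutes with $m$, and left and right multiplication by the fixed multiplier $m$ is strictly continuous, passing to the strict limit gives $m(1\otimes e_{ij})=(1\otimes e_{ij})m$ for all $i,j$, whence $m\in(1\otimes\BK)'\cap M(C\otimes\BK)$.

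Next, put $p=1\otimes e_{11}$. It is a full projection in $M(C\otimes\BK)$ (the ideal it generates contains every $cc'\otimes e_{ij}$, and such elements span a dense subspace), and $p(C\otimes\BK)p\cong C$; so by the standard description of a full corner, $pM(C\otimes\BK)p$ is identified with $M\bigl(p(C\otimes\BK)p\bigr)\cong M(C)$, compatibly with the inclusion $A\otimes e_{11}\cong A\subseteq C$ — equivalently, one may identify $C\otimes\BK=\mathcal{K}(H_C)$ and $M(C\otimes\BK)=\mathcal{L}(H_C)$ for the standard Hilbert $C$-module $H_C$, with $p$ the projection onto the first coordinate. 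Using that $m$ commutes with $p$ and with every $1\otimes e_{1i}$ and $1\otimes e_{i1}$, a direct computation gives $m=\sum_i(1\otimes e_{i1})\,pmp\,(1\otimes e_{1i})$ (a strictly convergent sum whose off-diagonal blocks vanish); writing $pmp=m_0\otimes e_{11}$ with $m_0\in M(C)$, this reads $m=m_0\otimes 1$.

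Finally, $m$ commutes with $A\otimes e_{11}$, which forces $m_0 a=a m_0$ in $M(C)$ for every $a\in A$, i.e. $m_0\in A'\cap M(C_r^*(\mathcal{A}))=\BC1$ by hypothesis. Hence $m_0=\lambda 1$ for some $\lambda\in\BC$ and $m=\lambda(1\otimes1)$, proving $(A\otimes\BK)'\cap M(C\otimes\BK)=\BC1$. The bulk of the work is routine strict-continuity bookkeeping in the second and third paragraphs; the one point that deserves care is the identification of the full corner $pM(C\otimes\BK)p$ with $M(C)$ — equivalently, a usable description of the multiplier algebra of the stabilization $C\otimes\BK$ — and checking that it is compatible with the embedding of $A$. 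That is where I expect the main effort to go.
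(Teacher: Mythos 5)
Your proposal is correct and follows essentially the same route as the paper: both reduce the statement, via the isomorphism of $\mathcal{A}^S$ with the semidirect product bundle (hence of the corresponding inclusions), to the stabilization fact that $A'\cap M(C_r^*(\mathcal{A}))=\BC1$ implies $(A\otimes\BK)'\cap M(C_r^*(\mathcal{A})\otimes\BK)=\BC1$. The only difference is that the paper obtains this last step by citing Lemma 3.1 of \cite{Kodaka:discrete}, whereas you prove it directly with the matrix-unit and strict-continuity argument showing $m=m_0\otimes 1$ with $m_0\in A'\cap M(C_r^*(\mathcal{A}))$.
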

\begin{proof} Since $A' \cap M(C_r^* (\mathcal{A}))=\BC 1$, by \cite [Lemma 3.1]{Kodaka:discrete}
$(A\otimes\BK)' \cap M(C_r^* (\mathcal{A})\otimes\BK)=\BC 1$. Since $\mathcal{A}^S$ is isomorphic to the
semidirect product bundle over $G$ of $A\otimes\BK$ induced by $(\alpha, w_{\alpha})$ as
Fell bundles over $G$, the inclusions $A\otimes\BK\subset C_r^* (\mathcal{A})\otimes\BK$ and
$A\otimes\BK\subset (A\otimes\BK)\rtimes_{\alpha, w_{\alpha}, r}G$ are isomorphic as inclusions of
$C^*$-algebras. Thus $(A\otimes\BK)' \cap M((A\otimes\BK)\rtimes_{\alpha, w_{\alpha}, r}G)=\BC1$.
\end{proof}

Let $(\alpha, w_{\alpha})$ and $(\beta, w_{\beta})$ be twisted actions of a countable discrete
group $G$ on $C^*$-algebras $A$ and $B$, respectively. We suppose that $(\alpha, w_{\alpha})$ and
$(\beta, w_{\beta})$ are strongly Morita equivalent with respect to a twisted action $\lambda$ of
$G$ on an $A-B$-equivalence bimodule $X$, that is, $\lambda$ is a map from $G$ to $\Aut(X)$
satisfying the following:
\newline
(1) $\alpha_t ({}_A \la x, y \ra)={}_A \la \lambda_t (x)\, , \, \lambda_t (y) \ra$,
\newline
(2) $\beta_t (\la x, y \ra_B )=\la \lambda_t (x) \, , \, \lambda_t (y) \ra_B$,
\newline
(3) $(\lambda_t \circ \lambda_s )(x)=w_{\alpha}(t, s)\lambda_{ts}(x)w_{\beta}(t, s)^*$
\newline
for any $t, s\in G$, $x, y\in X$, where we regard $X$ as a Hilbert $M(A)-M(B)$-bimodule as in
\cite [Prelominaries]{Kodaka:discrete}.
\par
Let $u$ and $v$ be unitary representations of $G$ to $M(A\rtimes_{\alpha, w_{\alpha}, r}G)$ and
$M(B\rtimes_{\beta, w_{\beta}, r}G)$ implementing $\alpha$ and $\beta$, respectively, that is,
$\alpha_t =\Ad(u_t )$ and $\beta_t =\Ad(v_t )$ for any $t\in G$.
Let $\mathcal{A}=\{Au_t \}_{t\in G}$ and $\mathcal{B}=\{Bv_t \}_{t\in G}$ be the semidirect
product Fell bundles over $G$ induced by $(\alpha, w_{\alpha})$ and $(\beta, w_{\beta})$, respectively.
\par
For any $t\in G$, let $X_t =Xv_t$ as Banach spaces. We regard $\mathcal{X}=\{X_t\}_{t\in G}$ as
an $\mathcal{A}-\mathcal{B}$-equivalence bundle in the following (See \cite {Exel:regular} and
\cite {KW2:discrete}): For any $au_t\in Au_t$, $bv_t \in Bv_t$,
$xv_s \in Xv_s$, $yv_t \in Xv_t$,
\begin{align*}
(au_t )(xv_s ) & =a\lambda_t (x)w_{\beta}(t, s)v_{ts} , \\
(xv_s )(bv_t ) & =x\beta_s (b)w_{\beta}(s, t)v_{st} , \\
{}_{\mathcal{A}} \la xv_s \, , \, yv_t \ra & ={}_A \la x \, , \, (\lambda_s \circ\lambda_t^{-1})(y) \ra
w_{\alpha}(t, s^{-1})\underline{\alpha_{ts^{-1}}}(w_{\alpha}(s, s^{-1}))^* u_{ts^{-1}} , \\
\la xv_s \, , \, yv_t \ra_{\mathcal{B}} & =\beta_s^{-1}(\la x, y \ra_B )
w_{\beta}(s^{-1} \, , \, s)^* w_{\beta}(s^{-1} \, , \, t)v_{s^{-1}t} .
\end{align*}

\begin{lemma}\label{lem:pre3} With the above notation, $\mathcal{X}=\{X_t \}_{t\in G}$ is an
$\mathcal{A}-\mathcal{B}$-equivalence bundle over $G$ such that
$$
\overline{{}_{\mathcal{A}} \la X_t \, , \, X_s \ra}=A_{ts^{-1}} \, , \, \overline{\la X_t \, , \, X_s \ra_{\mathcal{B}}}
=B_{t^{-1}s}
$$
for any $t, s\in G$.
\end{lemma}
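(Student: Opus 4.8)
The plan is to check the axioms of Definition~\ref{def:pre0} for the data written just above the statement, one block at a time, and then to extract the two saturation identities, which are the only genuinely new point. Since $G$ is discrete there is no Banach bundle structure to verify: each fibre $X_t=Xv_t$ is an isometric copy of the Banach space $X$ (right multiplication by the unitary $v_t\in M(B\rtimes_{\beta,w_{\beta},r}G)$ is an isometric bijection), and all continuity requirements on the four structure maps are vacuous, so everything reduces to algebra on the fibres. First I would dispose of (1R), (2R), (1L), (2L): from $(xv_s)(bv_t)=x\beta_s(b)w_{\beta}(s,t)v_{st}$ and the fact that $X$ is a Hilbert $M(A)$--$M(B)$-bimodule one gets $x\beta_s(b)w_{\beta}(s,t)\in X$, hence $X_sB_t\subset X_{st}$, with bilinearity clear; symmetrically $(au_t)(xv_s)=a\lambda_t(x)w_{\beta}(t,s)v_{ts}\in X_{ts}$ gives $A_tX_s\subset X_{ts}$; and the inclusions ${}_{\mathcal{A}}\la X_r,X_s\ra\subset A_{rs^{-1}}$ and $\la X_r,X_s\ra_{\mathcal{B}}\subset B_{r^{-1}s}$ are immediate from the defining formulas, whose unitary factors are $u_{rs^{-1}}$, respectively $v_{r^{-1}s}$.

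Next come the two inner-product blocks (3R)--(7R) and (3L)--(7L). Linearity ((3R), (3L)) is visible from the formulas. For (4R) one expands $\la xv_r,(yv_s)(bv_t)\ra_{\mathcal{B}}$ and $\la xv_r,yv_s\ra_{\mathcal{B}}(bv_t)$, cancels the $v_{\bullet}$ factors, and moves the $w_{\beta}$ factors past one another using the twisted cocycle identity $w_{\beta}(p,q)w_{\beta}(pq,r)=\beta_p(w_{\beta}(q,r))w_{\beta}(p,qr)$ together with $\beta_p\circ\beta_q=\Ad(w_{\beta}(p,q))\circ\beta_{pq}$; what remains is the right-module identity $\la x,yc\ra_B=\la x,y\ra_Bc$ for $X$, and $\la x,y\ra_{\mathcal{B}}^*=\la y,x\ra_{\mathcal{B}}$ reduces the same way to $\la x,y\ra_B^*=\la y,x\ra_B$. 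For (5R)--(6R): $\la xv_s,xv_s\ra_{\mathcal{B}}=\beta_s^{-1}(\la x,x\ra_B)w_{\beta}(s^{-1},s)^*w_{\beta}(s^{-1},s)v_e=\beta_s^{-1}(\la x,x\ra_B)\geq0$, which is $0$ iff $x=0$, the fibre is complete because $X$ is, and $\|xv_s\|^2=\|\beta_s^{-1}(\la x,x\ra_B)\|=\|\la x,x\ra_B\|=\|x\|^2$ since $\beta_s^{-1}$ is isometric. The left block (3L)--(6L) is handled identically, now working in $M(A)$ through the strictly continuous extension $\underline{\alpha}$, which is where factors such as $\underline{\alpha_{ts^{-1}}}(w_{\alpha}(s,s^{-1}))^*$ enter.

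The one substantial computation is the imprimitivity identity ${}_{\mathcal{A}}\la x,y\ra z=x\la y,z\ra_{\mathcal{B}}$ for $x=xv_r$, $y=yv_s$, $z=zv_t$: substituting the four formulas and expanding leaves a product of several $w_{\alpha}$, $w_{\beta}$ and $\underline{\alpha}_{\bullet}$ factors that must be collapsed using the cocycle identities for both twisted actions and the three equivariance relations (1)--(3) for $\lambda$ (in particular rewriting composites such as $\lambda_p\circ\lambda_q^{-1}$ via (3)), until the statement becomes the single bimodule identity ${}_A\la x',y'\ra z'=x'\la y',z'\ra_B$ in $X$. I expect this bookkeeping of cocycles, and the care needed because some factors live only in the multiplier algebras, to be the main obstacle; the underlying algebra is completely forced, and the verification is essentially the one already carried out in \cite{Exel:regular} and \cite{KW2:discrete}.

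It remains to prove the saturation identities, which is what Lemma~\ref{lem:pre3} is really for. Since $X$ is an $A$--$B$-equivalence bimodule, $\overline{\la X,X\ra_B}=B$ and $\overline{{}_A\la X,X\ra}=A$. Fixing $t,s\in G$ and letting $x,y$ range over $X$ in $\la xv_t,yv_s\ra_{\mathcal{B}}=\beta_t^{-1}(\la x,y\ra_B)w_{\beta}(t^{-1},t)^*w_{\beta}(t^{-1},s)v_{t^{-1}s}$, the closed linear span of these elements is $\overline{\beta_t^{-1}(\la X,X\ra_B)}\,w_{\beta}(t^{-1},t)^*w_{\beta}(t^{-1},s)v_{t^{-1}s}$, and since $\beta_t^{-1}$ is an automorphism of $B$ and the two $w_{\beta}$ factors are unitary multipliers this equals $Bv_{t^{-1}s}=B_{t^{-1}s}$; taking $t=s$ recovers (7R). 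The identity $\overline{{}_{\mathcal{A}}\la X_t,X_s\ra}=A_{ts^{-1}}$, and (7L) as its diagonal case, follows in exactly the same way from the formula for ${}_{\mathcal{A}}\la-,-\ra$, using that $\lambda_t\circ\lambda_s^{-1}$ is a bijection of $X$, that $\overline{{}_A\la X,X\ra}=A$, and that the remaining $w_{\alpha}$ and $\underline{\alpha}_{\bullet}$ cofactors appearing there are unitaries in $M(A)$.
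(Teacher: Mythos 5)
Your proposal is correct and follows essentially the same route as the paper: a direct verification of the axioms of Definition~\ref{def:pre0} from the four defining formulas, with the norm identity $\|\la xv_t,xv_t\ra_{\mathcal{B}}\|=\|\beta_t^{-1}(\la x,x\ra_B)\|=\|x\|^2$ giving (5R)--(6R), (5L)--(6L), and the fullness of the $A$--$B$-equivalence bimodule $X$ (together with the unitarity of the cocycle factors) giving $\overline{{}_{\mathcal{A}}\la X_t,X_s\ra}=A_{ts^{-1}}$ and $\overline{\la X_t,X_s\ra_{\mathcal{B}}}=B_{t^{-1}s}$. The paper's own proof is in fact terser than yours --- it asserts (1R)--(4R), (1L)--(4L) and the imprimitivity condition without computation, deferring as you do to \cite{Exel:regular} and \cite{KW2:discrete} --- so your additional detail on the saturation identities is a faithful expansion rather than a departure.
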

\begin{proof} By the definition of $\mathcal{X}=\{X_t\}_{t\in G}$, it is clear that $\mathcal{X}$ has
Conditions (1R)-(4R) and (1L)-(4L) in Definition \ref{def:pre0}. For any $xv_t \in Xv_t$,
\begin{align*}
||\la xv_t \, , \, xv_t \ra_{\mathcal{B}}|| & =||\beta_t^{-1}(\la x, x \ra_B )||
=||\la x, x \ra_B ||=||x||^2 =||xv_t ||^2 , \\
||{}_{\mathcal{A}} \la xv_t \, , \, xv_t \ra || & =||{}_A \la x, x \ra||=||xv_t ||^2 .
\end{align*}
Hence we see that $\{X_t \}_{t\in G}$ has Conditions (5R), (6R) and (5L), (6L) in Definition \ref{def:pre0}.
Furthermore, since
$X$ is an $A-B$-equivalence bimodule, we can see that $\mathcal{X}$ satisfies that
$$
\overline{{}_{\mathcal{A}} \la X_t \, , \, X_s \ra}=A_{ts^{-1}} \, , \, \overline{\la X_t \, , \, X_s \ra_{\mathcal{B}}}
=B_{t^{-1}s}
$$
for any $t, s\in G$.
\end{proof}

\section{Strong Morita equivalence}\label{sec:SM} Let $\mathcal{A}=\{A_t \}_{t\in G}$ and
$\mathcal{B}=\{B_t \}_{t\in G}$ be saturated Fell bundles over $G$. Let $A_e =A$ and $B_e =B$
be $C^*$-algebras.
Let $\mathcal{A}^S$ and $\mathcal{B}^S$ be the saturated Fell bundles over $G$ induced by
$\mathcal{A}$, $\BK$ and $\mathcal{B}$, $\BK$, respectively. Let $(\alpha, w_{\alpha})$ and
$(\beta, w_{\beta})$ be the twisted actions of $G$ on $A\otimes\BK$ and $B\otimes\BK$ such that
$\mathcal{A}^S$ and $\mathcal{B}^S$ are isomorphic to the semidirect product bundles of
$A\otimes\BK$ and $B\otimes\BK$ induced by $(\alpha, w_{\alpha})$ and $(\beta, w_{\beta})$,
which are defined in Section \ref{sec:pre}, respectively. We suppose that $A\subset C_r^* (\mathcal{A})$
and $B\subset C_r^* (\mathcal{B})$ are strongly Morita equivalent and that
$A' \cap M(C_r^* (\mathcal{A}))=\BC 1$.

\begin{lemma}\label{lem:SM1} With the above notation and assumptions, there is an automorphism $f$
of $G$ such that $(\alpha, w_{\alpha})$ is strongly Morita equivalent to $(\beta, w_{\beta}^f )$,
$(\beta^f , w_{\beta}^f )$ is the twisted action of $G$ on $B\otimes\BK$ induced by $(\beta, w_{\beta})$
and $f$, which is defined by
$$
\beta_t^f =\beta_{f(t)} \, , \, w_{\beta}^f (t, s)=w_{\beta}(f(t), f(s))
$$
for any $t, s\in G$.
\end{lemma}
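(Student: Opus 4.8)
The plan is to push the given strong Morita equivalence of the inclusions $A\subset C_r^*(\mathcal{A})$ and $B\subset C_r^*(\mathcal{B})$ down to a strong Morita equivalence of the inclusions induced by the twisted actions $(\alpha, w_\alpha)$ and $(\beta, w_\beta)$, and then to invoke the rigidity result \cite[Theorem 5.5]{Kodaka:discrete}; the latter applies precisely because the $\alpha$-side inclusion turns out to be irreducible.

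First I would record that strong Morita equivalence of inclusions of $C^*$-algebras is stable under stabilization. If $A\subset C_r^*(\mathcal{A})$ and $B\subset C_r^*(\mathcal{B})$ are strongly Morita equivalent, realized by a $C_r^*(\mathcal{A})-C_r^*(\mathcal{B})$-equivalence bimodule $Y$ together with a closed $A-B$-submodule $X\subset Y$ implementing the Morita equivalence of the subalgebras, then tensoring with the trivial $\BK-\BK$-equivalence bimodule $\BK$ yields a $(C_r^*(\mathcal{A})\otimes\BK)-(C_r^*(\mathcal{B})\otimes\BK)$-equivalence bimodule $Y\otimes\BK$ and a closed $(A\otimes\BK)-(B\otimes\BK)$-submodule $X\otimes\BK$ satisfying the same compatibility conditions. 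Since $C_r^*(\mathcal{A}^S)=C_r^*(\mathcal{A})\otimes\BK$ has unit fibre algebra $A\otimes\BK$, and likewise for $\mathcal{B}^S$, this shows that the inclusions induced by $\mathcal{A}^S$ and $\mathcal{B}^S$ are strongly Morita equivalent.

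Next I would observe that, since $\mathcal{A}^S$ is isomorphic as a Fell bundle over $G$ to the semidirect product bundle of $A\otimes\BK$ determined by $(\alpha, w_\alpha)$, taking reduced cross-sectional $C^*$-algebras gives an isomorphism of inclusions between $A\otimes\BK\subset C_r^*(\mathcal{A}^S)$ and $A\otimes\BK\subset (A\otimes\BK)\rtimes_{\alpha, w_\alpha, r}G$, and similarly $\mathcal{B}^S$ yields the inclusion $B\otimes\BK\subset (B\otimes\BK)\rtimes_{\beta, w_\beta, r}G$. Transporting the strong Morita equivalence from the previous step through these isomorphisms, the inclusions $A\otimes\BK\subset (A\otimes\BK)\rtimes_{\alpha, w_\alpha, r}G$ and $B\otimes\BK\subset (B\otimes\BK)\rtimes_{\beta, w_\beta, r}G$ are strongly Morita equivalent. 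By Lemma \ref{lem:pre2}, the standing assumption $A'\cap M(C_r^*(\mathcal{A}))=\BC 1$ forces $(A\otimes\BK)'\cap M((A\otimes\BK)\rtimes_{\alpha, w_\alpha, r}G)=\BC 1$, so the first of these two inclusions is irreducible (and hence so is the second, irreducibility being invariant under strong Morita equivalence of inclusions). Then \cite[Theorem 5.5]{Kodaka:discrete}, applied to this irreducible inclusion and its strong Morita equivalence with the inclusion induced by $(\beta, w_\beta)$, produces an automorphism $f$ of $G$ for which $(\alpha, w_\alpha)$ and $(\beta^f, w_\beta^f)$ are strongly Morita equivalent, which is the assertion.

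I expect the step needing the most care to be the first one — verifying that the stabilized pair $(Y\otimes\BK,\, X\otimes\BK)$ still satisfies the defining axioms of a strong Morita equivalence of inclusions — together with the routine but slightly delicate bookkeeping required to see that the Fell-bundle isomorphisms furnished by \cite[Theorem 7.3]{Exel:regular} genuinely induce isomorphisms of the associated inclusions of $C^*$-algebras. Once these identifications are in place, the conclusion is a direct application of \cite[Theorem 5.5]{Kodaka:discrete}.
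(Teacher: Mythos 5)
Your proposal is correct and follows essentially the same route as the paper: stabilize the strong Morita equivalence of inclusions by tensoring with $\BK$, transport it through the Fell-bundle isomorphisms from \cite[Theorem 7.3]{Exel:regular} to the inclusions induced by the twisted actions, verify irreducibility via Lemma \ref{lem:pre2}, and apply \cite[Theorem 5.5]{Kodaka:discrete}. The paper's own proof is just a terser version of this, leaving the stabilization and transport steps implicit (they are sketched in the introduction), so your write-up simply makes explicit what the paper takes for granted.
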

\begin{proof} Since $A' \cap M(C_r^* (\mathcal{A}))=\BC 1$, by Lemma \ref {lem:pre2}
$$
(A\otimes\BK)' \cap M((A\otimes\BK)\rtimes_{\alpha, w_{\alpha}, r}G)=\BC 1 .
$$
Hence by \cite [Theorem 5.5]{Kodaka:discrete} we obtain the conclusion.
\end{proof}

Let $\mathcal{B}^{S, f}=\{B_{f(t)}\otimes\BK\}_{t\in G}$ be the semidirect product bundle of $B\otimes\BK$
induced by $(\beta^f , w_{\beta}^f )$. Then by Lemma \ref{lem:pre3} and Lemma \ref {lem:SM1},
there is an $\mathcal{A}^S -\mathcal{B}^{S, f}$-equivalence bundle $\mathcal{Y}=\{Y_t \}_{t\in G}$
such that
$$
\overline{{}_{\mathcal{A}^S} \la Y_t \, , \, Y_s \ra}=A_{ts^{-1}}\otimes\BK \quad , \quad
\overline{\la Y_t \, , \, Y_s \ra_{\mathcal{B}^{S, f}}}=B_{f(t^{-1}s)}\otimes\BK
$$
for any $t, s\in G$.
\par
Let $\mathcal{B}^f=\{B_{f(t)}\}_{t\in G}$ be the Fell bundle over $G$ induced by $\mathcal{B}=\{B_t \}_{t\in G}$
and the automorphism $f$ of $G$.

\begin{lemma}\label{lem:SM2} With the above notation, $\mathcal{A}$ and $\mathcal{B}^f$
are equivalent with respect to an $\mathcal{A}-\mathcal{B}^f$-equivalence bundle
$\mathcal{X}=\{X_t \}_{t\in G}$ such that
$$
\overline{{}_{\mathcal{A}} \la X_t \, , \, X_s \ra}=A_{ts^{-1}} \quad , \quad
\overline{\la X_t \, , \, X_s \ra_{\mathcal{B}^f}}=B_{f(t^{-1}s)}
$$
\end{lemma}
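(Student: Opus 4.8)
The plan is to realise the desired equivalence bundle as a corner of the bundle $\mathcal{Y}$ obtained above. Put $p=1_{M(A)}\otimes e_{11}\in M(A\otimes\BK)$ and $q=1_{M(B)}\otimes e_{11}\in M(B\otimes\BK)$. These are full projections ($\overline{(A\otimes\BK)\,p\,(A\otimes\BK)}=A\otimes\BK$, and similarly for $q$), and $p(A_t\otimes\BK)p=A_t\otimes e_{11}$, $q(B_{f(t)}\otimes\BK)q=B_{f(t)}\otimes e_{11}$ for every $t\in G$. Since $C_r^*(\mathcal{A})\otimes e_{11}\cong C_r^*(\mathcal{A})$ and $C_r^*(\mathcal{B})\otimes e_{11}\cong C_r^*(\mathcal{B})$, the families $\{A_t\otimes e_{11}\}_{t\in G}$ and $\{B_{f(t)}\otimes e_{11}\}_{t\in G}$ are Fell bundles over $G$ isomorphic to $\mathcal{A}$ and $\mathcal{B}^f$, and we identify them with $\mathcal{A}$ and $\mathcal{B}^f$. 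I would then set $X_t:=pY_tq$, where the left $M(A\otimes\BK)$- and right $M(B\otimes\BK)$-actions on $Y_t$ are the strict extensions of the actions of $A\otimes\BK$ and $B\otimes\BK$. As $y\mapsto pyq$ is a contractive idempotent on the Banach space $Y_t$, each $X_t$ is a Banach space, and $\mathcal{X}=\{X_t\}_{t\in G}$ is a Banach bundle over $G$ (the bundle topology requiring nothing, $G$ being discrete).

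Next I would transport the structure of $\mathcal{Y}$ to $\mathcal{X}$. The maps $A_r\times X_s\to X_{rs}$ and $X_s\times B_{f(r)}\to X_{rs}$ are the restrictions of the actions of $\mathcal{Y}$; e.g. $(A_r\otimes e_{11})(pY_sq)=p(A_r\otimes e_{11})Y_sq\subset p(A_r\otimes\BK)Y_sq\subset pY_{rs}q$, and symmetrically on the right. The inner products are ${}_{\mathcal{A}}\la\xi,\eta\ra:={}_{\mathcal{A}^S}\la\xi,\eta\ra$ and $\la\xi,\eta\ra_{\mathcal{B}^f}:=\la\xi,\eta\ra_{\mathcal{B}^{S,f}}$; using that the left inner product of $\mathcal{Y}$ is balanced over $B\otimes\BK$ and the right one over $A\otimes\BK$, one gets ${}_{\mathcal{A}^S}\la pY_tq,pY_sq\ra=p\,{}_{\mathcal{A}^S}\la Y_tq,Y_sq\ra\,p\subset p(A_{ts^{-1}}\otimes\BK)p=A_{ts^{-1}}\otimes e_{11}$ and likewise $\la pY_tq,pY_sq\ra_{\mathcal{B}^{S,f}}\subset B_{f(t^{-1}s)}\otimes e_{11}$, so the inner products land in the correct fibres. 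Axioms (1L)--(6L), (1R)--(6R) of Definition \ref{def:pre0} together with the identity ${}_{\mathcal{A}}\la\xi,\eta\ra\zeta=\xi\la\eta,\zeta\ra_{\mathcal{B}^f}$ are then inherited verbatim from $\mathcal{Y}$.

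The real work is the fullness. I would first note that, for each fixed $t$, restricting the operations of $\mathcal{Y}$ to the single fibre $Y_t$ makes $Y_t$ a genuine $(A\otimes\BK)$--$(B\otimes\BK)$-imprimitivity bimodule: $(A\otimes\BK)Y_t\subset Y_t$, $Y_t(B\otimes\BK)\subset Y_t$, the imprimitivity relation holds inside $Y_t$, and (taking $s=t$ in the relations recorded just before the lemma) $\overline{{}_{\mathcal{A}^S}\la Y_t,Y_t\ra}=A\otimes\BK$ and $\overline{\la Y_t,Y_t\ra_{\mathcal{B}^{S,f}}}=B\otimes\BK$. Cutting this imprimitivity bimodule by the full projections $p$ and $q$ (the corner of an imprimitivity bimodule by full projections in the multiplier algebras of its coefficient algebras is again an imprimitivity bimodule, over the corresponding corners), each $X_t=pY_tq$ becomes a full $(A\otimes e_{11})$--$(B\otimes e_{11})$-imprimitivity bimodule; in particular $\overline{{}_{\mathcal{A}}\la X_t,X_t\ra}=A$ and $\overline{\la X_t,X_t\ra_{\mathcal{B}^f}}=B$, so (7L), (7R) hold and $\mathcal{X}$ is an $\mathcal{A}$--$\mathcal{B}^f$-equivalence bundle. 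For the sharper identities I would bootstrap off the diagonal. For $\xi,\xi'\in X_t$ and $\eta,\eta'\in X_s$, applying the imprimitivity relation twice, ${}_{\mathcal{A}}\la\xi,\eta\ra\,{}_{\mathcal{A}}\la\eta',\xi'\ra={}_{\mathcal{A}}\la\,{}_{\mathcal{A}}\la\xi,\eta\ra\eta',\xi'\ra={}_{\mathcal{A}}\la\xi\la\eta,\eta'\ra_{\mathcal{B}^f},\xi'\ra$; hence, with $N:=\overline{{}_{\mathcal{A}}\la X_t,X_s\ra}$ and using $\overline{\la X_s,X_s\ra_{\mathcal{B}^f}}=B$ together with $\overline{X_tB}=X_t$, one obtains $\overline{NN^*}=\overline{{}_{\mathcal{A}}\la X_t,X_t\ra}=A$. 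On the other hand $N$ is a closed $A$--$A$-sub-bimodule of the $A$--$A$-imprimitivity bimodule $A_{ts^{-1}}$ (here I use that $\mathcal{A}$ is saturated, as recalled in Section \ref{sec:pre}), and a closed sub-bimodule of an imprimitivity bimodule whose left inner product is full must coincide with the whole bimodule. Therefore $\overline{{}_{\mathcal{A}}\la X_t,X_s\ra}=A_{ts^{-1}}$, and the symmetric argument, using that $\mathcal{B}^f$ is saturated because $\mathcal{B}$ is, gives $\overline{\la X_t,X_s\ra_{\mathcal{B}^f}}=B_{f(t^{-1}s)}$.

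I expect this last fullness step to be the main obstacle: the corners ${}_{\mathcal{A}^S}\la pY_tq,pY_sq\ra$ are manifestly contained in $A_{ts^{-1}}\otimes e_{11}$ but not manifestly equal to it — the cut by $q$ shrinks $Y_s$ — and equality is recovered only indirectly, through the interplay of the two inner products (the imprimitivity relation), the saturation hypotheses, and the Rieffel correspondence for sub-bimodules of imprimitivity bimodules. The remaining points, namely matching the transported operations of $\mathcal{X}$ with those of $\mathcal{Y}$ on the nose and checking the Fell-bundle isomorphisms $\{A_t\otimes e_{11}\}\cong\mathcal{A}$ and $\{B_{f(t)}\otimes e_{11}\}\cong\mathcal{B}^f$, will require some routine care but no new idea.
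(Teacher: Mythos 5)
Your construction is exactly the paper's: cut the $\mathcal{A}^S$--$\mathcal{B}^{S,f}$-equivalence bundle $\mathcal{Y}$ on both sides by $1\otimes e_{11}$, identify the corners $(1\otimes e_{11})(A_t\otimes\BK)(1\otimes e_{11})$ and $(1\otimes e_{11})(B_{f(t)}\otimes\BK)(1\otimes e_{11})$ with $A_t$ and $B_{f(t)}$, and transport the operations; the verification of (1R)--(6R), (1L)--(6L) and the imprimitivity relation is the same routine restriction argument in both texts. Where you diverge is the fullness step, and there your route is genuinely different. The paper argues directly: writing a generic pair of elements as $(1\otimes e_{11})x_s c(1\otimes e_{11})$ and $(1\otimes e_{11})y_t d(1\otimes e_{11})$ with $c,d\in B\otimes\BK$, balancing the left inner product gives $(1\otimes e_{11})\,{}_{\mathcal{A}^S}\la x_s c(1\otimes e_{11})d^*,y_t\ra\,(1\otimes e_{11})$, and then fullness of $1\otimes e_{11}$ in $B\otimes\BK$ together with $\overline{Y_s(B\otimes\BK)}=Y_s$ (quoting Brown--Mingo--Shen) yields density in $A_{st^{-1}}\otimes e_{11}$ in one stroke, for all $s,t$ simultaneously. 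You instead establish only the diagonal case $\overline{{}_{\mathcal{A}}\la X_t,X_t\ra}=A$ from the corner-of-an-imprimitivity-bimodule fact, and recover the off-diagonal identities by showing $N=\overline{{}_{\mathcal{A}}\la X_t,X_s\ra}$ is a closed $A$--$A$-sub-bimodule of the imprimitivity bimodule $A_{ts^{-1}}$ with $\overline{NN^*}=A$, then invoking the Rieffel correspondence (closed sub-bimodules correspond to ideals of $A$) to force $N=A_{ts^{-1}}$. Your argument is correct --- the bimodule property of $N$, the computation of $\overline{NN^*}$ via the imprimitivity relation, and the appeal to saturation to make $A_{ts^{-1}}$ an imprimitivity bimodule all check out --- but it imports an external structural theorem where the paper needs only the fullness of the projection and a density statement; conversely, your version isolates cleanly why the cut by $q$ cannot shrink the range of the inner products, which the paper's one-line "in the same way" leaves implicit for the right-hand side.
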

\begin{proof} Let $X_t =(1\otimes e_{11})Y_t (1\otimes e_{11})$ for any $t\in G$.
Let $t, s\in G$ and let $a_t \in A_t$, $b_t \in B_t$, $x_s \in Y_s$ and $x_t , y_t \in Y_t$.
Then
\begin{align*}
a_t(1\otimes e_{11})x_s (1\otimes e_{11}) & =(1\otimes e_{11})(a_t \otimes e_{11})x_s (1\otimes e_{11})\in
X_{ts} , \\
(1\otimes e_{11})x_s (1\otimes e_{11})b_t & =(1\otimes e_{11})x_s (b_t \otimes e_{11})(1\otimes e_{11})\in
X_{st} ,
\end{align*}
\begin{align*}
& {}_{\mathcal{A}^S} \la (1\otimes e_{11})x_s (1\otimes e_{11}) \, , \, (1\otimes e_{11})y_t (1\otimes e_{11}) \ra \\
& =(1\otimes e_{11}) {}_{\mathcal{A}^S} \la x_s (1\otimes e_{11}) \, ,\,
y_t (1\otimes e_{11} \ra(1\otimes e_{11})
\in (1\otimes e_{11})(A_{st^{-1}}\otimes\BK)(1\otimes e_{11}),
\end{align*}
where we identify $(1\otimes e_{11})(A_{st^{-1}}\otimes\BK)(1\otimes e_{11})$
with $A_{st^{-1}}$.
Also,
\begin{align*}
& \la (1\otimes e_{11})x_s (1\otimes e_{11}) \, , \, (1\otimes e_{11})y_t (1\otimes e_{11}) \ra_{\mathcal{B}^{S, f}} \\
& =(1\otimes e_{11}) \la (1\otimes e_{11})x_s \, , \, (1\otimes e_{11})y_t \ra_{\mathcal{B}^{S, f}}(1\otimes e_{11})
\in (1\otimes e_{11})(B_{f(s^{-1}t)}\otimes\BK)(1\otimes e_{11}) ,
\end{align*}
where we identify $(1\otimes e_{11})(B_{f(s^{-1}t)}\otimes\BK)(1\otimes e_{11})$ with $B_{f(s^{-1}t)}$.
Hence $\mathcal{X}$ has Conditions (1R)-(4R) and (1L)-(4L) in Definition \ref {def:pre0}.
Furthermore,
\begin{align*}
& {}_{\mathcal{A}^S} \la (1\otimes e_{11})x_t (1\otimes e_{11}) \, , \, (1\otimes e_{11})x_t (1\otimes e_{11}) \ra \\
& =(1\otimes e_{11}) {}_{\mathcal{A}^S} \la x_t (1\otimes e_{11}) \, , \, x_t (1\otimes e_{11}) \ra
(1\otimes e_{11}) , \\
& \la (1\otimes e_{11})x_t (1\otimes e_{11}) \, , \, (1\otimes e_{11})x_t (1\otimes e_{11})
\ra_{\mathcal{B}^{S, f} }\\
& =(1\otimes e_{11}) \la (1\otimes e_{11})x_t \, , \, (1\otimes e_{11})x_t \ra_{\mathcal{B}^{S, f}}
(1\otimes e_{11}) .
\end{align*}
These equations implies that $\mathcal{X}$ has Conditions (5R) and (5L) in Definition \ref{def:pre0}.
It is clear that $\mathcal{X}$ has Conditions (6R) and (6L) in Definition \ref {def:pre0}.
Moreover, let $c, d \in B\otimes\BK$. Then
\begin{align*}
& {}_{\mathcal{A}^S} \la (1\otimes e_{11})x_s c(1\otimes e_{11})\, , \,
(1\otimes e_{11})y_t d (1\otimes e_{11}) \ra \\
& =(1\otimes e_{11}){}_{\mathcal{A}^S} \la x_s c(1\otimes e_{11})d^* \, , \, y_t \ra (1\otimes e_{11}) .
\end{align*}
Since $1\otimes e_{11}$ is full in $B\otimes \BK$, that is,
$\overline{(B\otimes\BK)(1\otimes e_{11})(B\otimes\BK)}=B\otimes\BK$ and
$\overline{Y_t (B\otimes\BK)}=Y_t$ by \cite [Proposition 1.7(i)]{BMS:quasi},
$$
\overline{{}_{\mathcal{A}^S} \la (1\otimes e_{11}) x_s (1\otimes e_{11})\, , \, (1\otimes e_{11})y_t (1\otimes e_{11}) \ra}
=A_{st^{-1}}\otimes e_{11} .
$$
In the same way, we can see that
$$
\overline{\la (1\otimes e_{11})x_s (1\otimes e_{11}) \, ,\, (1\otimes e_{11})y_t (1\otimes e_{11})
\ra_{\mathcal{B}^{S, f}}}=B_{f(s^{-1}t)}\otimes e_{11} .
$$
Hence 
$$
\overline{{}_{\mathcal{A}} \la X_t \, , \, X_s \ra}=A_{ts^{-1}} \quad , \quad
\overline{\la X_t \, , \, X_s \ra_{\mathcal{B}^f}}=B_{f(t^{-1}s)}
$$
for any $t, s\in G$. Since for any $x, y, z\in \mathcal{Y}$,
${}_{\mathcal{A}^S} \la x, y \ra z=x \la y, z \ra_{\mathcal{B}^{S, f}}$, we can see that 
\begin{align*}
& {}_{\mathcal{A}} \la (1\otimes e_{11})x(1\otimes e_{11}) \, , \, (1\otimes e_{11})y(1\otimes e_{11}) \ra
(1\otimes e_{11})z(1\otimes e_{11}) \\
& =(1\otimes e_{11})x(1\otimes e_{11}) \la (1\otimes e_{11})y (1\otimes e_{11}) \, , \,
(1\otimes e_{11})z(1\otimes e_{11}) \ra_{\mathcal{B}^f}
\end{align*}
for any $x, y, z\in \mathcal{Y}$. It follows that $\mathcal{X}$ is an $\mathcal{A}-\mathcal{B}^f$-
equivalence bundle. Therefore, we obtain the conclusion.
\end{proof}

\begin{thm}\label{thm:SM3} Let $G$ be a countable discrete group and let $\mathcal{A}=\{A_t\}_{t\in G}$
and $\mathcal{B}=\{B_t \}_{t\in G}$ be saturated Fell bundles over $G$. We suppose that
$A_e =A$ and $B_e =B$ are $\sigma$-unital $C^*$-algebras and that $A' \cap C_r^* (\mathcal{A})=\BC 1$,
where $e$ is the unit element in $G$ and $C_r^* (\mathcal{A})$ is the reduced cross-sectional
$C^*$-algebra of $\mathcal{A}$. If the inclusions of $C^*$-algebras induced by $\mathcal{A}$ and
$\mathcal{B}$ are strongly Morita equivalent, then there is an automorphism $f$ of $G$ such that
$\mathcal{A}$ and $\mathcal{B}^f$ are equivalent with respect to an $\mathcal{A}-\mathcal{B}^f$-
equivalence bundle $\mathcal{X}=\{X_t \}_{t\in G}$ over $G$ such that
$$
\overline{{}_{\mathcal{A}} \la X_t \, , \, X_s \ra}=A_{ts^{-1}} \quad , \quad
\overline{\la X_t \, , \, X_s \ra_{\mathcal{B}^f}}=B_{f(t^{-1}s)}
$$
for any $t, s\in G$, where $\mathcal{B}^f$ is a Fell bundle over $G$ induced by
$\mathcal{B}=\{B_t \}_{t\in G}$ and $f$, that is, $\mathcal{B}^f =\{B_{f(t)}\}_{t\in G}$.
\end{thm}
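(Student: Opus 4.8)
The plan is to reduce the assertion to the twisted-action setting already treated in Sections~\ref{sec:pre}--\ref{sec:SM} and then to read off the conclusion from the lemmas. First one stabilizes: form the saturated Fell bundles $\mathcal{A}^S=\{A_t\otimes\BK\}_{t\in G}$ and $\mathcal{B}^S=\{B_t\otimes\BK\}_{t\in G}$, whose unit fibres $A\otimes\BK$ and $B\otimes\BK$ are again $\sigma$-unital. By \cite[Theorem 7.3]{Exel:regular} there are twisted actions $(\alpha,w_\alpha)$ and $(\beta,w_\beta)$ of $G$ on $A\otimes\BK$ and $B\otimes\BK$ for which $\mathcal{A}^S$ and $\mathcal{B}^S$ are isomorphic, as Fell bundles over $G$, to the associated semidirect product bundles; in particular $C_r^*(\mathcal{A}^S)\cong C_r^*(\mathcal{A})\otimes\BK\cong(A\otimes\BK)\rtimes_{\alpha,w_\alpha,r}G$, compatibly with the unit-fibre inclusions, and similarly for $\mathcal{B}$.

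Next one transfers the two hypotheses to this picture. Since the inclusions $A\subset C_r^*(\mathcal{A})$ and $B\subset C_r^*(\mathcal{B})$ are strongly Morita equivalent, tensoring the equivalence data with $\BK$ shows that the stabilized inclusions $A\otimes\BK\subset C_r^*(\mathcal{A})\otimes\BK$ and $B\otimes\BK\subset C_r^*(\mathcal{B})\otimes\BK$ are strongly Morita equivalent, hence so are the inclusions induced by $(\alpha,w_\alpha)$ and $(\beta,w_\beta)$. For irreducibility, the hypothesis (used in the form $A'\cap M(C_r^*(\mathcal{A}))=\BC1$ as in the abstract and Section~\ref{sec:pre}) gives, via Lemma~\ref{lem:pre2}, that $(A\otimes\BK)'\cap M((A\otimes\BK)\rtimes_{\alpha,w_\alpha,r}G)=\BC1$. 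Thus the standing assumptions of Section~\ref{sec:SM} hold, and Lemma~\ref{lem:SM1} produces an automorphism $f$ of $G$ such that $(\alpha,w_\alpha)$ is strongly Morita equivalent to $(\beta^f,w_\beta^f)$.

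Finally one assembles the bundle. Applying Lemma~\ref{lem:pre3} to the strongly Morita equivalent twisted actions $(\alpha,w_\alpha)$ and $(\beta^f,w_\beta^f)$ yields an $\mathcal{A}^S-\mathcal{B}^{S,f}$-equivalence bundle $\mathcal{Y}=\{Y_t\}_{t\in G}$ with $\overline{{}_{\mathcal{A}^S}\la Y_t,Y_s\ra}=A_{ts^{-1}}\otimes\BK$ and $\overline{\la Y_t,Y_s\ra_{\mathcal{B}^{S,f}}}=B_{f(t^{-1}s)}\otimes\BK$ for all $t,s\in G$. Then Lemma~\ref{lem:SM2} compresses $\mathcal{Y}$ by the corner projection $1\otimes e_{11}$: with $X_t=(1\otimes e_{11})Y_t(1\otimes e_{11})$, and identifying $(1\otimes e_{11})(A_t\otimes\BK)(1\otimes e_{11})$ with $A_t$ and $(1\otimes e_{11})(B_t\otimes\BK)(1\otimes e_{11})$ with $B_t$, one obtains an $\mathcal{A}-\mathcal{B}^f$-equivalence bundle $\mathcal{X}=\{X_t\}_{t\in G}$ with $\overline{{}_{\mathcal{A}}\la X_t,X_s\ra}=A_{ts^{-1}}$ and $\overline{\la X_t,X_s\ra_{\mathcal{B}^f}}=B_{f(t^{-1}s)}$ for all $t,s\in G$, which is exactly the assertion.

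Since the genuinely new work is already contained in Lemma~\ref{lem:SM1} (via \cite[Theorem 5.5]{Kodaka:discrete}) and in the compression argument of Lemma~\ref{lem:SM2}, the main point requiring care in stitching the theorem together is that the reductions are legitimate: namely, that the isomorphisms provided by \cite[Theorem 7.3]{Exel:regular} are compatible with the unit-fibre inclusions at the level of reduced cross-sectional $C^*$-algebras and their multiplier algebras, so that both hypotheses — strong Morita equivalence of the inclusions and triviality of the relative commutant — really do descend to the twisted-action inclusions (this is exactly what is behind the proof of Lemma~\ref{lem:pre2}), and that the equivalence bundle $\mathcal{Y}$ supplied by Lemma~\ref{lem:pre3} is transported along these bundle isomorphisms so that it literally lies over $\mathcal{A}^S$ and $\mathcal{B}^{S,f}$ before the corner compression.
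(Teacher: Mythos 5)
Your proof is correct and takes essentially the same route as the paper: the paper's own proof of the theorem is simply a citation of Lemma~\ref{lem:pre2}, Lemma~\ref{lem:SM1}, Lemma~\ref{lem:SM2} and the discussion preceding Lemma~\ref{lem:SM2}, which is exactly the chain of reductions (stabilization, Exel's classification, transfer of the hypotheses, and corner compression) that you lay out. Your closing remark about checking that the bundle isomorphisms from \cite[Theorem 7.3]{Exel:regular} are compatible with the unit-fibre inclusions is a fair observation of a point the paper treats implicitly.
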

\begin{proof} This is immediate by Lemmas \ref{lem:SM1}, \ref{lem:SM2} and the discussions before
Lemma \ref{lem:SM2}.
\end{proof}


\begin{thebibliography}{99}



\bibitem{AF:Fell} F. Abadie and D. Ferraro, {\it Equivalence of Fell bundles over groups},
J. Operator Theory
{\bf 81}
(2019), 273--319.




\bibitem{BMS:quasi}L. G. Brown, J. Mingo and N-T. Shen,
{\it Quasi-multipliers and embeddings of Hilbert $C^*$-bimodules},
Can. J. Math.,
{\bf 46}
(1994),
1150--1174.



\bibitem{Exel:regular} R. Exel,
{\it Twisted partial actions: A classification of regular $C^*$-algebraic bundles},
Proc. London. Math. Soc.,
{\bf 74}
(1997),
417--443.



\bibitem{JT:KK}K. K. Jensen and K. Thomsen,
{\it Elements of KK-theory},
Birkh$\ddot a$user,
1991.



\bibitem{KW2:discrete}T. Kajiwara and Y. Watatani,
{\it Crossed products of Hilbert $C^*$-bimodules by countable discrete groups},
Proc. Amer. Math. Soc.,
{\bf 126}
(1998), 841--851.





\bibitem{Kodaka:discrete} K. Kodaka,
{\it Strong Morita equivalence for inclusions of $C^*$-algebras induced by twisted actions of
countable discrete group},
preprint, arXiv:1910.03774, Math. Scand. to appear.






\bibitem{KT4:morita}K. Kodaka and T. Teruya,
{\it The strong Morita equivalence for inclusions of $C^*$-algebras and conditional expectations for
equivalence bimodules}, J. Aust. Math. Soc.,
{\bf 105}
(2018),
103--144.














\end{thebibliography}
\end{document}